\newtheorem{theorem}{Theorem}
\newtheorem{lemma}{Lemma}
\newtheorem{condition}{Condition}
\newtheorem{proposition}{Proposition}
\DeclareMathOperator*{\argmin}{arg\,min}
\title{On the error in Laplace approximations
of high-dimensional integrals}
\author{Helen Ogden}
\date{University of Southampton, UK}
\begin{document}

\maketitle
\subsubsection*{Summary}
Laplace approximations are
commonly used to approximate high-dimensional integrals
in statistical applications, but the quality of such
approximations as the dimension of the integral grows
is not well understood.
In this paper, we prove a new result on the size of the
error in first- and higher-order
Laplace approximations, and apply this result to investigate
the quality of Laplace approximations to the likelihood in some
generalized linear mixed models.
\\ \\\textbf{Keywords}: Asymptotic approximation; 
Intractable likelihood;
Generalized linear mixed model

\section{Introduction}
Integrals of the form
\begin{equation}
  \label{eq:form_of_integral}
 L =  \int_{\mathbb{R}^d} \exp\{-g({u})\} d{u}
\end{equation}
are frequently encountered in statistical applications, where $g(.)$ is a smooth function with a
unique minimum. For example, the likelihood 
function for a generalized linear mixed model is of this
form, where ${u}$ is a vector of random effects.
Integrals of this type are also common in Bayesian applications,
for example as marginal likelihoods 
used for model comparison.

Laplace approximations are often used to approximate
integrals of form  \eqref{eq:form_of_integral}.
 Suppose that $g({u})$ grows at rate $n$.
Often $g({u})$ is a sum with one term for each observation, so $n$
is the sample size. If $d$ is fixed
as $n \rightarrow \infty$, many results
on the quality of the Laplace approximation
are available: see \cite{Small2010} for a review.
However, in many examples of interest,
$d$ and $n$ tend to infinity simultaneously, and
there are very few results available on the quality
of Laplace approximations in this setting.

\cite{Shun1995} provide a formal expansion
for integrals of type \eqref{eq:form_of_integral}.
By studying the size of various terms in this
expansion, 
they conjecture that the first-order 
Laplace approximation should be reliable
if $d = o(n^{1/3})$, 
under the assumption that all derivatives of $g(.)$
grow at rate $n$. This condition is typically not met
for generalized linear mixed models,
and we give an example in which
$d = o(n^{1/3})$ but the error in the first-order Laplace
approximation grows with $n$.

In Section \ref{sec:main_result}, assuming alternative conditions on $g(.)$,
we develop a new result on the error
in Laplace approximations of various orders to integrals
of type \eqref{eq:form_of_integral}.
Our result is motivated by a two-level random
intercept model with $n_j$ observations on items in the $j$th cluster,
for which the likelihood
factorizes into a product of terms
\[L = \prod_{j=1}^d \int_{-\infty}^\infty \exp\{-g_j(u_j)\} du_j, \]
where each $g_j(u_j)$ is a sum over $n_j$ terms.
In this case, we could use existing results on the error
of Laplace approximations to one-dimensional integrals to show
 that the error in the first-order
Laplace approximation to the integral
is $O(\sum_{j=1}^{d} n_j^{-1})$. We show that a version
of this result also holds more generally, and find
similar expressions for the error in
higher-order Laplace approximations.
In Section \ref{sec:glmm}, we apply these results
to study the quality of Laplace approximations of
the likelihood for some generalized linear mixed models,
including a multilevel random intercept model
with any number of levels of hierarchy.

\section{Error in the log-integral approximation}
\label{sec:main_result}
\subsection{A series expansion for the log-integral}
\cite{Shun1995} give a series expansion for the log-integral
$\ell= \log L$. We use their expansion here, 
expressed with slightly different notation.
We write
\begin{equation}
  \label{eqn:logintegral_series}
  \ell = \tilde \ell_1 + \sum_{l=1}^\infty e_l,
\end{equation}
where $\tilde \ell_1$
is the first-order Laplace approximation to the log-integral,
and $e_l$ are contributions to the error in this
approximation of size decreasing with $l$, which we define
in Section \ref{sec:e_l_expansion}.

The first-order Laplace approximation to the log-integral is
\[\tilde \ell_1 = -\frac{1}{2} \log \det (g^{(2)}) + \frac{d}{2} \log(2 \pi) - g({\hat u})\]
where 
${\hat u} = \argmin_{{u} \in \mathbb{R}^d}\left\{g({u})\right\}$
and $g^{(2)} = g^{\prime \prime}({\hat u})$ is the matrix of second
derivatives of $g(.)$ with respect
to $u$, evaluated at ${\hat u}$.

Based on the decomposition \eqref{eqn:logintegral_series},
 we may also define an
order-$k$ Laplace approximation to the log-integral, for
$k \geq 2$, as
\[\tilde \ell_k = \tilde \ell_1 + \sum_{l=1}^{k-1} e_l.\]
What is meant by the order of a Laplace approximation
is not standard across the literature: our definition
is made by grouping together terms in a series expansion
to the log-integral in terms of their asymptotic order.
This is a different notion of order than
that used by \cite{Raudenbush2000}, who group
together terms according to the number of derivatives
required to compute them.

In this paper, we study the errors in these Laplace approximations
to the log-integral
\[\epsilon_k = \tilde \ell_k - \ell = - \sum_{l=k}^{\infty} e_k.\]

\subsection{An expansion over bipartitions}

\cite{Shun1995} give a series expansion
for the log-integral
in terms of particular bipartitions.
For positive integers $v$ and $m$,
define the set of $M$-bipartitions $\mathcal{M}_{v, m}$
to be all $(P, Q)$ such that 
$P = (p_1|\ldots|p_v)$ and $Q = (q_1|\ldots|q_m)$ 
are both partitions
of $\{1, \ldots, 2 m\}$, 
such that each block of $P$ contains at least three
elements and each block of $Q$ contains exactly two
elements.

For each $(P, Q) \in \mathcal{M}_{v, m}$, define
a corresponding graph $\mathcal{G}(P, Q)$ with
 vertices $1, \ldots, 2m$,  and an edge
between each pair of vertices contained
in the same block of either $P$ or $Q$.
If $\mathcal{G}_{P, Q}$ is a connected graph,
say that $(P, Q)$ is a connected bipartition,
and write $(P, Q) \in \mathcal{M}^C_{v, m}$.
We define the level of $(P, Q) \in \mathcal{M}_{v, m}$
to be $l = m - v$, and write $\mathcal{M}^C_{l}$
for all connected level-$l$ $M$-bipartitions.

\label{sec:e_l_expansion}

For a vector of indices $I$, write
$g_I({u}) = \nabla_{{u}_I} g({u})$
and $g_I = g_I({\hat u})$.
Let $g^{(k)}$ be the $k$-dimensional array with
entries $g^{(k)}_{j_1, \ldots, j_k} = g_{j_1, \ldots, j_k}$, and
write
$g^{jk} = \left(g^{(2)}\right)^{-1}_{jk}$.
Then define
\begin{equation}
  \label{eqn:e_P_Q}
  e_{P, Q} =
  \frac{(-1)^v}{(2m)!} \sum_{{j} \in [1:d]^{2m}}
  g_{{j}_{p_1}} \ldots  g_{{j}_{p_v}}
  g^{{j}_{q_1}} \ldots g^{{j}_{q_m}},
\end{equation}
where $[1:d]^{2m} = \{(j_1, \ldots, j_{2m}): j_l \in \{1, \ldots, d\}\},$
and ${j}_p$ is the sub-vector of ${j}=(j_1, \ldots, j_{2m})$ corresponding to the indices in $p$.

We may write the level-$l$ contribution
to the log-integral $e_l$ as a sum
of contributions from each connected level-$l$ $M$-bipartition, as
\begin{equation}
e_l = \sum_{(P, Q) \in \mathcal{M}_l^C} e_{P, Q}.
\end{equation}

\subsection{The level-$1$ contribution}
To demonstrate the definitions in Section \ref{sec:e_l_expansion},
we find the level-$1$ contribution $e_1$, used in
the second-order Laplace approximation.

There are three types of bipartitions in $\mathcal{M}^C_1$:
$(P_1, Q_1)$, where $P_1 = (1 \, 2 \, 3 \, 4)$ and
$Q_1 = (1 \, 2 \, | \, 3 \, 4)$;
$(P_2, Q_2)$, where $P_2 = (1 \, 2 \, 3 \, | \, 4 \, 5 \, 6)$
and $Q_2 = (1 \, 2 \, | \, 3 \, 4 \, | \, 5 \, 6)$;
and $(P_3, Q_3)$
where $P_3 = P_2$ and
$Q_3 = (1 \, 4 \, | \, 2 \, 5 \, | \, 3 \, 6)$.
While there are other bipartitions in $\mathcal{M}^C_1$,
they are all similar to one of these three, in that
they may be obtained by rearranging the labels 
$\{1, \ldots, 2m\}$, and so give the same contribution
$e_{P, Q}$. For example, the bipartition
$P_1^* = (1 \, 2 \, 3 \, 4)$, $Q_1^* = (1 \, 3 \, | \, 2 \, 4)$
 may be obtained from $(P_1, Q_1)$
 by exchanging $2$ and $3$, and $e_{P_1^*, Q_1^*} = e_{P_1, Q_1}$.
From \eqref{eqn:e_P_Q}, we have
\begin{align}
  \label{eqn:e_P_Q_for_e_1}
  e_{P_1, Q_1} &= -\frac{1}{4!} \sum_{j_1, \ldots, j_4} g_{j_1 j_2 j_3 j_4} g^{j_1 j_2} g^{j_3 j_4} \notag \\
  e_{P_2, Q_2} &=  \frac{1}{6!} \sum_{j_1, \ldots, j_6} g_{j_1 j_2 j_3} g_{j_4 j_5 j_6} g^{j_1 j_2} g^{j_3 j_4} g^{j_5 j_6} \\
  e_{P_3, Q_3} &= \frac{1}{6!} \sum_{j_1, \ldots, j_6} g_{j_1 j_2 j_3} g_{j_4 j_5 j_6} g^{j_1 j_4} g^{j_2 j_5} g^{j_3 j_6}. \notag
\end{align}

\cite{McCullagh1987} lists
$4$ bipartitions similar to $(P_1, Q_2)$, $9$
similar to $(P_2, Q_2)$ and $6$ similar to $(P_3, Q_3)$,
so the level-$1$ contribution is
$e_1 = 3 e_{P_1, Q_1} + 9 e_{P_2, Q_2} + 6 e_{P_3, Q_3},$
and the second-order Laplace approximation to the log-likelihood is
$\tilde \ell_2 = \tilde \ell_1 + e_1.$

There may be more efficient ways to compute
$e_1$ than direct computation of the  sums in
\eqref{eqn:e_P_Q_for_e_1}.
For example, \cite{Zipunnikov2011} describe a more efficient
method for computing these terms for a generalized linear mixed model.

\subsection{Asymptotic order of terms}

Write
$a = \Theta(b)$ if $a = O(b)$ and $a^{-1} = O(b^{-1})$,
so $a$ grows at the same rate as $b$.
For a random variable $A$, write
$A = \Theta_p(b_n)$ if $A = O_p(b_n)$ and $A^{-1} = O_p(b_n^{-1})$.

We use a particular notion of a random array being order $1$
in probability. Suppose $A$ is a $k$-dimensional array,
  with entries $A_{j_1, \ldots, j_k}$
  for each $j_i \in \{1, \ldots, d\}$.
  If $k = 1$, say $A = O_p^*(1)$
  if $A_j = O_p(1)$ for each $j = 1, \ldots, d$.
  If $k \geq 2$, let
  \[A^i_{j} = \sum_{j_1 = 1}^d  \ldots \sum_{j_{i-1} = 1}^d \sum_{j_{i+1} = 1}^d \ldots \sum_{j_k=1}^d
  \vert A_{j_1, \, \ldots,\, j_{i-1}, \,j, \, j_{i+1}, \, \ldots,\, j_k}\vert,\]
  and say $A = O_p^*(1)$ if $A^i_{j} = O_p(1)$ for each $i = 1, \ldots, k$
  and $j = 1, \ldots, d$.

If $A$ is a diagonal array, then $A = O_p^*(1)$
if the diagonal
entries $A_{j, j, \ldots, j} = O_p(1)$.

\subsection{Assumptions}

We assume that $g(.)$ in \eqref{eq:form_of_integral}
satisfies some conditions.

\begin{condition}
  \label{cond:g_smooth_unique_minimum}
  $g(.)$ is a smooth function with a unique minimum.
\end{condition}

For a given choice of normalizing terms
$n_1, \ldots, n_d$, and for each
vector of indices $I$, define the normalized derivatives
\[f_I = g_I \prod_{j \in I} n_j^{-1/|I|} ,\]
and write $f^{(k)}$ for the $k$-dimensional array with
entries $f^{(k)}_{j_1, \ldots, j_k} = f_{j_1, \ldots, j_k}$.
We write $f^{jk} = [(f^{(2)})^{-1}]_{jk}$.

\begin{condition}
  \label{cond:f_O*1}
  There is some choice of normalizing terms
$n_1, \ldots, n_d$ such that the
  normalized derivative arrays $f^{(k)}$ satisfy
 $f^{(k)} = O_p^*(1)$ for all $k \geq 3$, and
  $[f^{(2)}]^{-1} = O_p^*(1)$.
\end{condition}

The normalizing terms are often chosen so that
$g_{jj} = \Theta_p(n_j)$, and we may think of $n_j$
as an effective sample size for $u_j$.

\subsection{Error in log-integral approximations}
We state here our main result, which is proved
in Appendix \ref{sec:proofs_main}. 
\begin{theorem}
  \label{thm:error_in_logintegral}
  Suppose $L$ is of form \eqref{eq:form_of_integral},
  where $g(.)$ satisfies Conditions \ref{cond:g_smooth_unique_minimum}
  and \ref{cond:f_O*1}, for some choice of normalizing
  terms $n_1, \ldots, n_d$. Then
  the error in the order-$k$ Laplace approximation to $\log L$ is
$\epsilon_k = O_p(\sum_{j=1}^d n_j^{-k}).$
\end{theorem}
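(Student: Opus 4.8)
Because $\epsilon_k = -\sum_{l=k}^\infty e_l$ and each $e_l = \sum_{(P,Q)\in\mathcal M_l^C} e_{P,Q}$ is a finite sum (for fixed $l$ the constraints $|p_a|\ge 3$ and $\sum_a|p_a| = 2m$ force $l+1 \le m \le 3l$, so $\mathcal M_l^C$ is finite), I would reduce the theorem to a single bound at the level of individual bipartitions,
\[e_{P,Q} = O_p\Big(\sum_{j=1}^d n_j^{-l}\Big) \qquad \text{for every } (P,Q)\in\mathcal M_l^C,\]
and then sum the resulting estimates over $l\ge k$. The factorized example, in which $g^{(2)}$ and every $g^{(s)}$ are diagonal, shows why this is the right target: diagonality together with connectivity of $\mathcal G(P,Q)$ forces all $2m$ indices in \eqref{eqn:e_P_Q} to coincide, the sum collapses to $\sum_j (g_{j\cdots j})^v (g^{jj})^m = \Theta_p(\sum_j n_j^{v-m})$, and $v - m = -l$.

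\textbf{Normalisation and power counting.} For a general $g$ I would first pass to the normalised arrays of Condition \ref{cond:f_O*1}. Writing $s_a = |p_a|$, we have $g_{j_{p_a}} = f_{j_{p_a}}\prod_{i\in p_a} n_{j_i}^{1/s_a}$ and, since $g^{(2)} = D^{1/2} f^{(2)} D^{1/2}$ with $D = \mathrm{diag}(n_1,\dots,n_d)$, also $g^{j j'} = (n_j n_{j'})^{-1/2} f^{j j'}$. Substituting into \eqref{eqn:e_P_Q} and taking absolute values gives
\[|e_{P,Q}| \le \frac{1}{(2m)!}\sum_{j\in[1:d]^{2m}} \Big(\prod_{a=1}^v |f_{j_{p_a}}|\Big)\Big(\prod_{b=1}^m |f^{j_{q_b}}|\Big)\prod_{i=1}^{2m} n_{j_i}^{\alpha_i},\]
where the exponent attached to index $i$ is $\alpha_i = 1/|p(i)| - 1/2$, with $p(i)$ the $P$-block containing $i$. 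Since every $P$-block has at least three elements, $\alpha_i \le 1/3 - 1/2 = -1/6 < 0$, and a short computation gives $\sum_{i=1}^{2m}\alpha_i = v - m = -l$. Setting $w_i = -\alpha_i/l$, so that $w_i > 0$ and $\sum_i w_i = 1$, the weighted arithmetic--geometric mean inequality yields $\prod_i n_{j_i}^{\alpha_i} = \prod_i (n_{j_i}^{-l})^{w_i} \le \sum_i w_i\, n_{j_i}^{-l}$. This is the key device: it removes all the mixed powers of $n$ and leaves, for each choice of a single free index position $i_0$, the purely combinatorial quantity
\[T_r = \sum_{\substack{j\in[1:d]^{2m}\\ j_{i_0}=r}} \Big(\prod_{a=1}^v |f_{j_{p_a}}|\Big)\Big(\prod_{b=1}^m |f^{j_{q_b}}|\Big).\]

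\textbf{The main obstacle: the contraction bound.} Everything now rests on showing $T_r = O_p(1)$ uniformly in $r$, after which $|e_{P,Q}| \le \sum_{i_0} w_{i_0}\sum_r n_r^{-l} T_r = O_p(\sum_j n_j^{-l})$ follows. This is where connectivity and the $O_p^*(1)$ hypothesis do the work, and I expect it to be the hardest step. I would contract each $P$-block to a single node, turning the $m$ pairs of $Q$ into edges of a multigraph $H$ on $v$ nodes; connectivity of $\mathcal G(P,Q)$ implies $H$ is connected, so $H$ has a spanning tree of $v-1$ edges and exactly $m-(v-1) = l+1$ remaining edges. For the spanning-tree edges I would sum out indices from the root $i_0$ outwards, each summation controlled either by the row-sum bound $\sum_{j}|f^{r j}| = O_p(1)$ coming from $[f^{(2)}]^{-1} = O_p^*(1)$, or by the all-but-one-index bound $\sum |f_{j_{p_a}}| = O_p(1)$ coming from $f^{(|p_a|)} = O_p^*(1)$. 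The $l+1$ surplus edges are the genuine difficulty, since the indices they carry are shared between several tensors that are being summed simultaneously; I would handle them by first bounding the offending inverse-Hessian entries $|f^{j_{q_b}}|$ by their $O_p^*(1)$ bound (each entry is $O_p(1)$, being dominated by a row sum) and, where a single index is claimed by two contractions at once, splitting the corresponding factor with the Cauchy--Schwarz inequality so that the $O_p^*(1)$ slice sums can still be applied. Verifying that this bookkeeping closes, and that the bounds are uniform in the retained index $r$, is the crux of the argument.

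\textbf{Summation over levels.} Finally I would sum the per-level estimates. Combining the bound above over $(P,Q)\in\mathcal M_l^C$ gives $e_l = O_p(c_l \sum_j n_j^{-l})$, where $c_l$ collects the $O_p(1)$ constants and the number of connected level-$l$ bipartitions weighted by $1/(2m)!$. Counting perfect matchings and block partitions of $\{1,\dots,2m\}$ with $m\le 3l$ shows $c_l \le R^l$ for some constant $R$, so for effective sample sizes $n_j$ bounded away from $R$ the tail $\sum_{l\ge k} c_l\, n_j^{-l}$ converges and is dominated by its leading $l = k$ term. Hence $\epsilon_k = -\sum_{l\ge k} e_l = O_p(\sum_{j=1}^d n_j^{-k})$, as claimed.
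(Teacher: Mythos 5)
Your proposal follows essentially the same route as the paper: reduce to a per-bipartition bound $e_{P,Q}=O_p(\sum_j n_j^{-l})$, normalize so that the exponents $\alpha_i=1/|p(i)|-1/2$ are negative and sum to $-l$, apply the weighted AM--GM inequality to replace $\prod_i n_{j_i}^{\alpha_i}$ by $\sum_i w_i n_{j_i}^{-l}$, and then control the remaining slice sums $T_r$ using connectivity and the $O_p^*(1)$ hypothesis. The contraction bound you flag as the crux is exactly what the paper's Propositions \ref{proposition:product_O*1} and \ref{proposition:A_Op*1} establish, and your spanning-tree traversal of the contracted multigraph with Cauchy--Schwarz for shared indices and entrywise bounds for the $l+1$ surplus $Q$-edges is precisely the paper's alternating $p$/$q$ construction; if anything you are more explicit than the paper about why the tail sum over levels $l\geq k$ is dominated by its leading term.
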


\subsection{Linear reparameterizations}
Laplace approximations are invariant
to linear reparameterizations. That is, if 
${v} = A {u}$, where $A$ is an invertible $d \times d$ matrix,
then writing $g_v(v) = g(A^{-1} v) + \log \det (A),$
and
\[L^{(v)} =  \int_{\mathbb{R}^d} \exp\{-g_v({v})\} d{v},\]
we have $L^{(v)} = L$, and 
the order-$k$ Laplace
approximation of $L$
is unchanged by the reparameterization, so that
$\tilde L^{(v)}_k = \tilde L_k.$ 

In many situations, Condition \ref{cond:f_O*1} does
not hold in the original parameterization, but does
hold after making a suitable linear reparameterization,
so we may still apply Theorem \ref{thm:error_in_logintegral}.
We give an example of this in Section \ref{sec:multilevel}.

\section{Application to likelihood approximation for generalized linear mixed models}
\label{sec:glmm}
\subsection{The model}
\label{sec:glmm_model}
In a generalized linear mixed model, the
distribution of the
response ${Y} = (Y_1, \ldots, Y_n)$ is determined
by a linear predictor ${\eta} = (\eta_1, \ldots, \eta_n)$.
Conditional on $\eta$, the components $Y_i$
of the response are independent, with known density function
$f(y_i | \eta_i)$.
We assume an exponential family with canonical link, so that
\[\log f(y_i | \eta_i) = \frac{y_i \eta_i - b(\eta_i)}{a_i(\phi)},\]
where $b(.)$ is a smooth and convex function,
$a_i(\phi) > 0$, and $\phi$ is the
dispersion parameter, which we assume here to be known.
The linear predictor is modelled as
 ${\eta} = X {\beta} + Z {u},$ where $X \in \mathbb{R}^{n \times p}$ and
 $Z \in \mathbb{R}^{n \times d}$ are
design matrices, ${\beta} \in \mathbb{R}^p$ is a 
vector of fixed effects, and ${u} \in \mathbb{R}^d$ is a vector
of random effects. We assume
that ${u} \sim N_d(0, \Sigma({\psi}))$, where ${\psi} \in \mathbb{R}^{q}$
is an unknown parameter, and write ${\theta} = ({\beta}, {\psi})$ for
the full vector of unknown parameters.

\subsection{The likelihood}
The likelihood for this model is
\begin{equation}
\label{eqn:GLMM_likelihood}
L({\theta}) = \int_{\mathbb{R}^d}  \exp\{ - g({u}; {\theta}) \} d{u},
\end{equation}
where
\begin{equation}
  \label{eqn:GLMM_g}
  g({u}; {\theta}) = h({u}; {\beta}) - \log \phi_d({u}; 0, \Sigma({\psi})),\
  \end{equation}
\begin{align}
  \label{eqn:GLMM_h}
  h({u}; {\beta}) &= \sum_{i=1}^n - \log f(y_i | \eta_i = {X_i}^T {\beta} + {Z_i}^T {u}) \notag \\
  &= \sum_{i=1}^n \frac{b({X_i}^T {\beta} + {Z_i}^T {u}) - y_i({X_i}^T {\beta} + {Z_i}^T {u})}{a_i(\phi)}
 \end{align}
and $\phi_d(.; {\mu}, \Sigma)$ is the $N_d({\mu}, \Sigma)$
density function.
The $d$-dimensional integral in \eqref{eqn:GLMM_likelihood}
is typically intractable, except in the special
case of a linear mixed model where $Y_i|\eta_i$ are
normally distributed. Because of this intractability,
it is common to use some numerical approximation $\tilde L({\theta})$
to the likelihood, and first-order Laplace approximation
is often used. For example, by default
the \texttt{lme4} R package
\citep{lme4} uses a first-order Laplace approximation
to the likelihood for inference, and the integrated
nested Laplace approximations of \cite{Rue2009}
is a Bayesian approach based on a Laplace
approximation to the likelihood.

\subsection{Assumption checking}

In order to apply Theorem \ref{thm:error_in_logintegral} to
the likelihood of a generalized linear mixed
model, we will first have to show
that $g(.)$ as defined in \eqref{eqn:GLMM_g} satisfies Conditions
\ref{cond:g_smooth_unique_minimum} and \ref{cond:f_O*1}.
We drop $\theta$ from the notation, so that \eqref{eqn:GLMM_likelihood}
is of form \eqref{eq:form_of_integral}.

We can show Condition \ref{cond:g_smooth_unique_minimum}
holds in all cases. The proof is in Appendix \ref{sec:proofs_examples}.
\begin{proposition}
  \label{proposition:GLMM_g_convex}
  Let $g({u})$ be as defined in \eqref{eqn:GLMM_g},
  where $\Sigma$ is a positive definite matrix. 
  Then $g(.)$ satisfies Condition \ref{cond:g_smooth_unique_minimum}.
\end{proposition}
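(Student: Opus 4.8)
The plan is to establish the two parts of Condition~\ref{cond:g_smooth_unique_minimum} in turn: smoothness, which is immediate, and the existence of a \emph{unique} minimizer, which is where the real work lies. Writing out the Gaussian log-density explicitly, we have
\[g({u}) = h({u}; {\beta}) + \tfrac{1}{2} {u}^T \Sigma({\psi})^{-1} {u} + c,\]
where $c$ collects the terms $\tfrac{d}{2}\log(2\pi) + \tfrac{1}{2} \log\det\Sigma$ that do not depend on ${u}$. Smoothness is then inherited from the assumed smoothness of $b(.)$, the positive-definiteness of $\Sigma$ (so that $\Sigma^{-1}$ exists and the quadratic term is smooth), and the fact that ${u} \mapsto {X_i}^T{\beta} + {Z_i}^T{u}$ is affine; the finite sum and composition in \eqref{eqn:GLMM_h} preserve smoothness.

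For the minimizer, I would first argue that $g$ is strictly convex. Each summand $b({X_i}^T{\beta} + {Z_i}^T{u})$ in \eqref{eqn:GLMM_h} is a convex function of ${u}$, being the composition of the convex $b(.)$ with an affine map, while the remaining terms $-y_i({X_i}^T{\beta} + {Z_i}^T{u})$ are affine; dividing by $a_i(\phi) > 0$ and summing keeps $h(.\,;{\beta})$ convex. The quadratic form $\tfrac{1}{2} {u}^T\Sigma^{-1}{u}$ is \emph{strictly} convex because $\Sigma^{-1}$ is positive definite, so $g$, as a sum of a convex and a strictly convex function, is strictly convex. Strict convexity guarantees that any minimizer is unique.

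It remains to show that a minimizer exists, and this is the step I expect to be the main obstacle, since a strictly convex function need not attain its infimum. The idea is to show that $g$ is coercive, that is, $g({u}) \to \infty$ as $\|{u}\| \to \infty$. Because $b(.)$ is convex, it lies above each of its tangent lines, so $b(t) \ge b(0) + b'(0)\, t$ for all $t$; applying this to each summand gives an affine lower bound $h({u};{\beta}) \ge {a}^T {u} + c'$ for some vector ${a}$ and constant $c'$. Combined with the quadratic term, this yields
\[g({u}) \ge {a}^T {u} + \tfrac{1}{2} {u}^T \Sigma^{-1} {u} + (c + c'),\]
and the right-hand side tends to infinity as $\|{u}\| \to \infty$, since the Hessian $\Sigma^{-1}$ is positive definite and its smallest eigenvalue bounds the quadratic growth from below. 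Coercivity forces the sublevel sets of the continuous function $g$ to be compact, so the infimum is attained; together with the uniqueness from strict convexity, this establishes that $g$ has a unique minimum and completes the verification of Condition~\ref{cond:g_smooth_unique_minimum}.
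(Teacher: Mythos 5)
Your proof is correct, and it is slightly more complete than the one in the paper. The paper establishes strict convexity by computing the Hessian directly, writing $g^{(2)}({u}) = Z^T W({u}) Z + \Sigma^{-1}$ with $W({u})$ diagonal and non-negative (since $b^{\prime\prime} \geq 0$ and $a_i(\phi) > 0$), so that $g^{(2)}({u})$ is positive definite everywhere; you reach the same conclusion at the level of function composition, noting that each $b({X_i}^T{\beta} + {Z_i}^T{u})$ is convex as a convex function of an affine map and that the Gaussian term contributes a strictly convex quadratic. These are essentially the same argument in two guises. Where you genuinely go beyond the paper is the existence step: the paper's proof passes directly from ``$g$ is strictly convex'' to ``$g$ has a unique minimum,'' which is not a valid implication on its own (the function $e^x$ is strictly convex on $\mathbb{R}$ with no minimizer). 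Your coercivity argument --- bounding $h$ below by an affine function via the tangent-line inequality $b(t) \geq b(0) + b^{\prime}(0)t$, so that the positive definite quadratic dominates and sublevel sets are compact --- is exactly what is needed to close that gap, and it is a worthwhile addition rather than a detour.
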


We need to show that Condition \ref{cond:f_O*1} holds on a
case-by-case basis.
In our examples, we choose the normalizing term $n_j$
to be the number of observations which involve
$u_j$.

\subsection{A two-level random intercept model}
\label{sec:two_level_ri}
We consider a two-level random intercept model, which is a special case of
the generalized linear mixed
model of Section \ref{sec:glmm_model} in which each observation $i$
is contained in a cluster $c(i)$. Observations in the same
cluster $j$ are correlated by a shared random effect $u_j$.
The linear predictor is
$\eta_{i} = {x_i}^T {\beta} + u_{c(i)}  \, (i = 1, \ldots, n),$
where we suppose the
$u_j$ are independent $N(0, \sigma^2)$ random variables.
In the notation of Section \ref{sec:glmm_model}, we have
$Z_{i, c(i)} = 1$, and $Z_{i, j} = 0$ if $j \not = c(i)$ and
$\Sigma = \sigma^2 I$, where $I$ is an
identity matrix.

In this special case, the likelihood \eqref{eqn:GLMM_likelihood}
simplifies into a product of one-dimensional integrals
\[L({\theta}) = \prod_{j=1}^d \int \prod_{i: c(i) = j} f(y_i | \eta_i = {x_i}^T {\beta} + u_j) \phi(u_j; 0, \sigma^2) du_j.\]
The log-likelihood may be written as a sum
\begin{equation}
  \label{eq:loglikelihood_two_level}
  \ell({\theta}) = \sum_{j=1}^d \log \int \prod_{i: c(i) = j} f(y_i | \eta_i = {x_i}^T {\beta} + u_j) \phi(u_j; 0, \sigma^2) du_j,
  \end{equation}
so $\epsilon_k$ is a sum of separate error terms.

\begin{proposition}
  \label{proposition:two_level}
Suppose we have a two-level random
intercept model,
with $n_j$ observations on cluster $j$, for $j = 1, \ldots d$.
The error in the order-$k$ Laplace
approximation to the log-likelihood is
\[\epsilon_k({\theta}) =
\tilde \ell_k({\theta}) - \ell({\theta}) = O_p\Big(\sum_{j=1}^d n_j^{-k}\Big).\]
\end{proposition}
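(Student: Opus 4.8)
\section*{Proof proposal}

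The plan is to exploit the separable structure of the two-level model and then invoke Theorem~\ref{thm:error_in_logintegral} directly. Because each observation loads on a single random effect and the prior covariance is diagonal, $\Sigma = \sigma^2 I$, the objective in \eqref{eqn:GLMM_g} is additively separable: writing $\eta_i = x_i^T\beta + u_{c(i)}$, we have $g(u) = \sum_{j=1}^d g_j(u_j)$ with $g_j(u_j) = \sum_{i: c(i)=j}\{b(\eta_i) - y_i\eta_i\}/a_i(\phi) + u_j^2/(2\sigma^2)$, up to an additive constant. Consequently $\hat u$ is obtained coordinatewise, $\hat u_j = \argmin_{u_j} g_j(u_j)$, every mixed partial derivative of $g$ vanishes, and so each derivative array $g^{(k)}$ is diagonal. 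By the remark following the definition of $O_p^*(1)$, Condition~\ref{cond:f_O*1} then reduces to checking the per-cluster diagonal entries, and a direct application of Theorem~\ref{thm:error_in_logintegral} with normalizing terms $n_j$ would give $\epsilon_k = O_p(\sum_{j=1}^d n_j^{-k})$ at once.

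First I would dispatch Condition~\ref{cond:g_smooth_unique_minimum}, which follows immediately from Proposition~\ref{proposition:GLMM_g_convex} since $\Sigma = \sigma^2 I$ is positive definite (equivalently, each $g_j$ is smooth and strictly convex owing to the quadratic prior term). To verify Condition~\ref{cond:f_O*1} I would compute the diagonal entries of the normalized arrays, namely $f^{(k)}_{jj\cdots j} = g_j^{(k)}(\hat u_j)/n_j$ and $[f^{(2)}]^{-1}_{jj} = n_j/g_j''(\hat u_j)$. Differentiating $g_j$ gives $g_j^{(k)}(\hat u_j) = \sum_{i:c(i)=j} b^{(k)}(\hat\eta_i)/a_i(\phi)$ for $k\ge 3$ and $g_j''(\hat u_j) = \sum_{i:c(i)=j} b''(\hat\eta_i)/a_i(\phi) + \sigma^{-2}$, each a sum over the $n_j$ observations in cluster $j$. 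I would then bound $g_j^{(k)}(\hat u_j) = O_p(n_j)$ from above using a bound on $|b^{(k)}|$, and bound $g_j''(\hat u_j)$ from below by a positive multiple of $n_j$ using a positive lower bound on $b''$, so that $g_j''(\hat u_j)^{-1} = O_p(n_j^{-1})$; together these give $f^{(k)} = O_p^*(1)$ for $k \ge 3$ and $[f^{(2)}]^{-1} = O_p^*(1)$.

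The main obstacle is supplying these two-sided bounds on the derivatives of $b$ evaluated at the fitted linear predictors, and doing so uniformly across the possibly growing collection of clusters. Since $b$ is smooth and strictly convex, its derivatives are continuous and $b'' > 0$, so on any fixed compact set the required upper bounds on $|b^{(k)}|$ and a positive lower bound on $b''$ are automatic; everything therefore reduces to showing that the fitted predictors $\hat\eta_i = x_i^T\beta + \hat u_j$ lie in a compact set with high probability. This needs the fixed part $x_i^T\beta$ to be bounded and, crucially, control of $\hat u_j$. Here the strong convexity contributed by the prior term $u_j^2/(2\sigma^2)$ is what pins $\hat u_j$ down: the stationarity condition $\hat u_j = -\sigma^2\sum_{i:c(i)=j}\{b'(\hat\eta_i) - y_i\}/a_i(\phi)$ combined with the quadratic penalty prevents $\hat u_j$ from escaping, giving $\hat u_j = O_p(1)$. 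Establishing this control of $\hat u_j$, and ensuring the resulting constants are uniform in $j$ so that the diagonal $O_p^*(1)$ condition genuinely holds, is the delicate part; the remaining derivative bookkeeping is routine.

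With Conditions~\ref{cond:g_smooth_unique_minimum} and~\ref{cond:f_O*1} in hand, Theorem~\ref{thm:error_in_logintegral} applies and yields $\epsilon_k(\theta) = O_p(\sum_{j=1}^d n_j^{-k})$ directly. As a sanity check, the same conclusion can be reached through the factorization of the log-likelihood in \eqref{eq:loglikelihood_two_level}: since $\ell = \sum_j \ell_j$ and the order-$k$ Laplace approximation splits as $\tilde\ell_k = \sum_j \tilde\ell_{k,j}$, the error is the sum $\epsilon_k = \sum_j \epsilon_{k,j}$ of one-dimensional errors, each $O_p(n_j^{-k})$ by the $d=1$ case of Theorem~\ref{thm:error_in_logintegral}; the diagonal-array formulation is simply what makes the uniform-summation bookkeeping automatic.
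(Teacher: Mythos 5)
Your proposal takes essentially the same route as the paper: observe that the separable structure makes every derivative array $g^{(k)}$ diagonal with diagonal entries $\Theta_p(n_j)$, so Condition~\ref{cond:f_O*1} holds with normalizing terms $n_1,\ldots,n_d$, and then apply Theorem~\ref{thm:error_in_logintegral}. The paper's proof simply asserts $g_{j\ldots j}=\Theta_p(n_j)$ without the uniform control of $\hat u_j$ and of the derivatives of $b$ that you correctly flag as the delicate step, so your version is, if anything, more careful than the published one.
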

\begin{proof}
The derivative arrays $g^{(k)}$ are diagonal
for all $k$, with diagonal entries
$g_{j \ldots j} = \Theta_p(n_j)$, so Condition \ref{cond:f_O*1} holds
with normalizing terms $n_1, \ldots, n_d$.
Theorem \ref{thm:error_in_logintegral}
gives that $\epsilon_k({\theta}) = O_p(\sum_{j=1}^d n_j^{-k})$,
as required.
\end{proof}

In the balanced case, where all $n_j = n d^{-1}$,
$\epsilon_k = O_p(d^{k + 1} n^{-k})$. This tends to zero as $n \rightarrow \infty$
if $d = o(n^{(k+1)/k})$. The error in the first-order Laplace approximation
tends to zero if $d = o(n^{1/2})$.

In an unbalanced case, the result can be quite different. As an
extreme example, suppose
\[n_j = \begin{cases}
  \log d & \text{if $j = 1, \ldots, d-1$} \\
  n - (d-1) \log d & \text{if $j = d$,}
\end{cases}\]
where $n > d \log d$.
 Then
\[\epsilon_1 = O_p\big((d-1) (\log d)^{-1} + (n - (d-1) \log d)^{-1} \big) = O_p\big(d (\log d)^{-1}\big),\]
which tends to infinity as $d \rightarrow \infty$, now matter how
large $n$ is relative to $d$. For example, if $n = d^4$, then $d = o(n^{1/3})$,
but $\epsilon_1 \rightarrow \infty$.

 \subsection{A multilevel random intercept model}
 \label{sec:multilevel}
Suppose that each observation $i$
is contained in a level-2 cluster $c_2(i)$,
and that each level-2 cluster $j$ is itself contained within
a hierarchy of higher-level clusters,
$c_l(j)$, $j = 3, \ldots, L$. The clusters are nested
within one another, so that if $c_l(j) = c_l(k)$, then
$c_{l+1}(j) = c_{l+1}(k)$.
The linear predictor is
\[\eta_{i} = {x_i}^T {\beta} + u^{(2)}_{c_2(i)} + \sum_{l=3}^L u^{(l)}_{c_l(c_2(i))} \quad
(i = 1, \ldots, n),\]
where we assume
$u^{(l)}_j \sim N(0, \sigma_l^2)$, $l = 2, \ldots, L$,
with all the $u_j$ independent.
Suppose that there are $d$ level-2 clusters in total,
and $d_l$ level-$l$ clusters, for each $l = 3, \ldots, L$.
 It is no
longer possible to write the log-likelihood
as a sum of one-dimension log-integrals
as in \eqref{eq:loglikelihood_two_level}. Since an accurate approximation
to the exact log-likelihood is no longer readily available,
it is important to understand the quality of the Laplace approximation
in this case.

Condition \ref{cond:f_O*1} does not hold for this parameterization,
so we define a new parameterization of the model.
Let $v_j = u^{(2)}_j + \sum_{l=3}^L u^{(l)}_{c_l(j)}$ for $j = 1, \ldots, d$.
We have
$\eta_{i} = {x_i}^T {\beta} + v_{c_2(i)}$, where there are now
a total of $d$ random effects, rather
than $d + d_3 + \ldots + d_L$ in the original parameterization.
We have reduced the structure to the two-level random
intercept model of Section  \ref{sec:two_level_ri}, except now
${v} \sim N_d(0, \Sigma)$, where
\[\Sigma_{jk} = \begin{cases}
  \sigma_2^2 + \sigma_3^2 + \ldots + \sigma_L^2 & \text{if $j = k$} \\
  \sigma_3^2 + \ldots + \sigma_L^2  & \text{if $j \not = k$, but $c_3(j) = c_3(k)$} \\
 \vdots & \vdots \\
  \sigma_l^2 + \ldots + \sigma_L^2 & \text{if $c_{l-1}(j) \not = c_{l-1}(k)$, but $c_{l}(j)  = c_{l}(k)$} \\
 \vdots & \vdots \\
    \sigma_L^2 & \text{if $c_{L-1}(j) \not = c_{L-1}(k)$, but $c_{L}(j)  = c_{L}(k)$} \\
   0 & \text{if $c_{L}(j) \not = c_{L}(k).$}
    \end{cases}\]

\begin{proposition}
  \label{proposition:multilevel}
  Suppose we have an $L$-level random
intercept model with independent random effects,
with $n_j$ observations in level-$2$ cluster $j$, for $j = 1, \ldots d$.
The error in the order-$k$ Laplace
approximation to the log-likelihood is
\[\epsilon_k({\theta}) =
\tilde \ell_k({\theta}) - \ell({\theta}) = O_p\Big(\sum_{j=1}^d n_j^{-k}\Big).\]
\end{proposition}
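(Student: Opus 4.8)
The plan is to apply Theorem~\ref{thm:error_in_logintegral} to the reparameterized model, so the entire task reduces to verifying that Condition~\ref{cond:f_O*1} holds for $g(\cdot)$ in the $v$-parameterization with the stated normalizing terms $n_1, \ldots, n_d$. Proposition~\ref{proposition:GLMM_g_convex} already gives Condition~\ref{cond:g_smooth_unique_minimum}, and by the linear-reparameterization invariance noted just after Theorem~\ref{thm:error_in_logintegral}, the order-$k$ Laplace approximation and hence $\epsilon_k$ are identical in the original and reduced parameterizations; so it suffices to work entirely with the $d$-dimensional vector $v \sim N_d(0, \Sigma)$ and the reduced $g(v) = h(v; \beta) - \log \phi_d(v; 0, \Sigma)$.

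First I would compute the derivative arrays of $g$ in the reduced parameterization. The key observation is that $h(v; \beta)$ depends on $v$ only through the linear predictors $\eta_i = x_i^T\beta + v_{c_2(i)}$, so each observation $i$ contributes to derivatives of $h$ only in the single coordinate $v_{c_2(i)}$; consequently the arrays $h^{(k)}$ are diagonal, with diagonal entry $h_{j\ldots j} = \sum_{i: c_2(i)=j} b^{(k)}(\eta_i)/a_i(\phi) = \Theta_p(n_j)$, exactly as in the two-level case. The Gaussian part contributes $-\log\phi_d(v;0,\Sigma)$, whose derivatives are: a quadratic term adding $\Sigma^{-1}$ to the second-derivative array $g^{(2)}$, and \emph{zero} contribution to all higher arrays $g^{(k)}$, $k\ge 3$ (since the log-density is quadratic in $v$, its third and higher derivatives vanish). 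Hence for $k\ge3$ the arrays $g^{(k)} = h^{(k)}$ remain diagonal with entries $\Theta_p(n_j)$, and the normalized arrays $f^{(k)}$ are diagonal with $O_p(1)$ diagonal entries, giving $f^{(k)} = O_p^*(1)$ immediately by the diagonal-array criterion stated in the excerpt.

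The main obstacle is the second-derivative array. We have $g^{(2)} = D + \Sigma^{-1}$, where $D$ is diagonal with $D_{jj} = h_{jj} = \Theta_p(n_j)$, and I must show $[f^{(2)}]^{-1} = O_p^*(1)$, i.e.\ that the off-diagonal mass of $(g^{(2)})^{-1}$ after normalization is controlled row-by-row. Unlike the two-level case, $\Sigma^{-1}$ is \emph{not} diagonal, so $g^{(2)}$ has off-diagonal entries inherited from the nested covariance structure of $\Sigma$, and I cannot simply read off a diagonal inverse. The heart of the argument will be to exploit the block/nested structure of $\Sigma$: because the clusters are nested, $\Sigma$ (and $\Sigma^{-1}$) have a hierarchical banded form in which each level-$l$ block couples only finitely many coordinate-groups, and I expect to bound the row-sums $[(f^{(2)})^{-1}]^i_j$ either by a direct Neumann-type expansion $(D+\Sigma^{-1})^{-1} = D^{-1}(I + \Sigma^{-1}D^{-1})^{-1}$, using that $D^{-1} = \Theta_p(\mathrm{diag}(n_j^{-1}))$ dominates and $\Sigma^{-1}$ has bounded entries, or by a Woodbury/Sherman--Morrison identity that writes $\Sigma^{-1}$ in terms of the low-rank indicator matrices encoding cluster membership at each level.

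To make the normalization explicit, recall that $f^{(2)}_{jk} = g_{jk}\, n_j^{-1/2} n_k^{-1/2}$, so $[f^{(2)}]^{-1}_{jk} = g^{jk}\, n_j^{1/2} n_k^{1/2}$, and the requirement $[f^{(2)}]^{-1} = O_p^*(1)$ is that $\sum_k |g^{jk}|\, n_j^{1/2} n_k^{1/2} = O_p(1)$ for each $j$. Since $g^{jk}$ are the entries of $(D+\Sigma^{-1})^{-1}$, which are $O_p(n_j^{-1})$ on the diagonal and decay on the off-diagonal at a rate governed by how many coordinates share a common higher-level cluster, the crux of the estimate is a counting argument showing these weighted row-sums stay bounded uniformly in $j$. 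Once $[f^{(2)}]^{-1} = O_p^*(1)$ is established, Condition~\ref{cond:f_O*1} holds in full, Theorem~\ref{thm:error_in_logintegral} applies directly, and I obtain $\epsilon_k(\theta) = O_p(\sum_{j=1}^d n_j^{-k})$ as claimed.
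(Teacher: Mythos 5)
Your set-up matches the paper's: reparameterize to the $d$-dimensional vector $v$ with covariance $\Sigma$, invoke invariance of the Laplace approximation under linear reparameterization, observe that $g^{(k)}=h^{(k)}$ is diagonal with entries $\Theta_p(n_j)$ for $k\ge 3$ so that $f^{(k)}=O_p^*(1)$, and reduce everything to showing $[f^{(2)}]^{-1}=O_p^*(1)$. You also correctly identify that requirement as $\sum_k |g^{jk}|\,n_j^{1/2}n_k^{1/2}=O_p(1)$ for each $j$. But at exactly that point the proposal stops being a proof: you write that you ``expect to bound the row-sums either by a direct Neumann-type expansion \ldots or by a Woodbury/Sherman--Morrison identity'' and that ``the crux of the estimate is a counting argument,'' without carrying out either route. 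The assertions you rely on --- that $\Sigma^{-1}$ has bounded entries with a hierarchical decay structure, and that $g^{jk}$ ``decays on the off-diagonal at a rate governed by how many coordinates share a common higher-level cluster'' --- are precisely the claims that need proof; they are not obvious, because $\Sigma^{-1}$ couples all $d^l_{c_l(j)}$ coordinates within each level-$l$ cluster, and a row of $g^{-1}$ therefore has many nonzero entries whose individual sizes must be pinned down before the weighted row-sum can be controlled.

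The paper fills this gap with a double induction over the levels $l=2,\ldots,L$, applying the Sherman--Morrison formula blockwise at each level: first to show $\Sigma_{[l]}^{jk}=\Theta\big((d^l_{c_l(j)})^{-1}\big)$ for pairs whose finest common cluster is at level $l$, and then, using that, a second induction giving $g_{[l]}^{jk}=O_p\big((d^l_{c_l(j)})^{-1}n_j^{-1}n_k^{-1}\big)$ off the diagonal and $O_p(n_j^{-1})$ on it. Only with these entry-wise rates does the row-sum $\sum_k |f^{jk}|$ collapse to $O_p(1+n_j^{-1/2})$. Your Neumann-expansion idea could plausibly be made to work, but it would still require establishing the entry-wise sizes of $\Sigma^{-1}$ (the paper's first induction) and a convergence/summability argument with the $n_j^{1/2}n_k^{1/2}$ weights; none of that is present. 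As written, the proposal is a correct plan with the central technical lemma missing.
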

The proof is in Appendix \ref{sec:proofs_examples}.

The asymptotic order of the error in a Laplace
approximation to the log-likelihood depends
on the number of observations in each of the level-$2$
clusters, but not on how these level-$2$ clusters
are grouped into higher-level clusters.

\subsection{Impact on approximate likelihood inference}
When an approximate likelihood $\tilde L({\theta})$ is used for inference,
the impact of the error in the likelihood approximation
on the resulting inference is of more interest than the size of
that error itself. If the error in the log-likelihood
$\epsilon({\theta}) = \log \tilde L({\theta}) - \log L({\theta})$
tends to zero in probability, uniformly in $\theta$,
\cite{Douc2004} show that the approximate likelihood estimator
$\tilde \theta$ will be fully efficient, and have the same
first-order asymptotic distribution as the maximum likelihood estimator.
In our examples, if
\begin{equation}
  \label{eqn:efficiency_condition}
  \sum_{j=1}^d n_j^{-k} \rightarrow 0 \text{ as $d \rightarrow \infty$}
  \end{equation}
we expect the order-$k$ Laplace estimator to be fully efficient.
In order to make the argument rigorous,
we would need to show that the supremum of the error in log-likelihood
in some region around the true parameter value tends to zero.

However, condition \eqref{eqn:efficiency_condition} is likely to be
stronger than necessary for a order-$k$ Laplace
estimator to be fully efficient. 
\cite{Ogden2017}
gives conditions on the size of the error in the score function
$\nabla_\theta \epsilon(\theta)$
which ensure that inference with an approximate likelihood
retains the same first-order properties as inference with
the exact likelihood.
By studying this error in score,
it should be possible to show
that the order-$k$ Laplace estimator is fully efficient
under a weaker condition than \eqref{eqn:efficiency_condition}.
Some modification of the results of \cite{Ogden2017} would be required before they
could be used in this case, as
information on different components of the
parameter vector may grow at different rates \citep{Nie2007}.

\begin{appendices}

\section{Proof of main result}
\label{sec:proofs_main}

To prove Theorem \ref{thm:error_in_logintegral}, we aim to find
the size of the contribution from
each bipartition $(P, Q)$.
\begin{lemma}
\label{lemma:e_P_Q}
  Suppose Condition \ref{cond:f_O*1} holds.
  For each fixed bipartition $(P, Q) \in \mathcal{M}^C_{l}$
\[e_{P, Q} =
 O_p\Big(\sum_{j=1}^d n_j^{-l}\Big).\]
\end{lemma}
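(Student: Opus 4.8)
The plan is to pass from the derivative arrays $g^{(k)}$ to the normalized arrays $f^{(k)}$, which are controlled by Condition \ref{cond:f_O*1}, and then to bound the resulting sum of products by contracting the connected graph $\mathcal{G}(P,Q)$ one index at a time.

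First I would rewrite \eqref{eqn:e_P_Q} in terms of the $f$-arrays. Writing $D = \mathrm{diag}(n_1,\ldots,n_d)$, the definitions give $g_{j_p} = f_{j_p}\prod_{t\in p} n_{j_t}^{1/|p|}$ for each block $p$, and, from $f^{(2)} = D^{-1/2} g^{(2)} D^{-1/2}$, the relation $g^{jk} = (n_j n_k)^{-1/2} f^{jk}$. Substituting these, every index position $a\in\{1,\ldots,2m\}$ lies in exactly one block $p(a)$ of $P$ and exactly one block of $Q$, so it carries a net power $c_a = 1/|p(a)| - 1/2$ of $n_{j_a}$. Since each block of $P$ has at least three elements, $c_a \le -1/6 < 0$, and summing over the $2m$ positions gives $\sum_a c_a = v - m = -l$.

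Next I would dispose of the weights $\prod_a n_{j_a}^{c_a}$. As $n_j \ge 1$ and every $c_a < 0$, bounding each $n_{j_a}$ below by $\min_a n_{j_a}$ yields $\prod_a n_{j_a}^{c_a} \le (\min_a n_{j_a})^{-l} = \max_a n_{j_a}^{-l} \le \sum_{a=1}^{2m} n_{j_a}^{-l}$. Hence
\[ |e_{P,Q}| \le \frac{1}{(2m)!} \sum_{a=1}^{2m} \sum_{s=1}^d n_s^{-l}\, V_a(s), \qquad V_a(s) = \sum_{j:\, j_a = s} \prod_{i=1}^v |f_{j_{p_i}}| \prod_{r=1}^m |f^{j_{q_r}}|. \]
It therefore suffices to show that $V_a(s) = O_p(1)$ \emph{uniformly} in $s$, since then each inner sum is $O_p(\sum_s n_s^{-l})$ and there are only $2m$ values of $a$.

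The main obstacle is this last contraction bound. The available tools are exactly the line-sum bounds of Condition \ref{cond:f_O*1}: each matrix $|f^{j_{q_r}}|$ has maximal entry and all row and column sums $O_p(1)$ (because $[f^{(2)}]^{-1} = O_p^*(1)$), while each tensor $|f_{j_{p_i}}|$ has the sum over all but one of its indices $O_p(1)$ (because $f^{(k)} = O_p^*(1)$), and partial sums over still fewer indices are dominated by these. I would establish $V_a(s) = O_p(1)$ by eliminating the indices $j_b$, $b\ne a$, one at a time from the leaves of a spanning tree of $\mathcal{G}(P,Q)$ inwards: the $Q$-matrices sitting on non-tree edges incident to $j_b$ are replaced by their maximal entries, and the sum over $j_b$ is then carried out against the single factor that joins $j_b$ to the rest of the tree, using a row or column sum of a $Q$-matrix or an all-but-one sum of a $P$-tensor. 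Each such step costs a factor $O_p(1)$, and after $2m-1$ steps only the pinned index $s$ remains, giving $V_a(s) = O_p(1)$ uniformly. The delicate point --- and the reason the lemma is stated only for connected bipartitions --- is that the elimination order must never leave an index to be summed freely: connectivity of $\mathcal{G}(P,Q)$ is exactly what guarantees a spanning tree, and hence an admissible order, whereas a disconnected component not containing $s$ would be summed unconstrained and contribute a spurious power of $d$. Keeping the bridge edges intact (summing them with their line-sum bounds rather than discarding them) is what preserves this control, as the level-$1$ term $e_{P_2,Q_2}$ illustrates.
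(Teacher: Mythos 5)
Your proposal is correct and follows essentially the same route as the paper: your normalization step with exponents $c_a = 1/|p(a)| - 1/2 < 0$ summing to $-l$ reproduces Proposition \ref{proposition:polynomial_e_P_Q}, your elementary bound $\prod_a n_{j_a}^{c_a} \le (\min_a n_{j_a})^{-l} \le \sum_a n_{j_a}^{-l}$ plays the role of the paper's weighted AM--GM inequality, and your claim that $V_a(s) = O_p(1)$ uniformly is exactly the statement $A_{P,Q} = O_p^*(1)$ of Proposition \ref{proposition:A_Op*1}, which the paper also proves by contracting the connected graph $\mathcal{G}(P,Q)$, only organized as an inductive build-up of the product (Proposition \ref{proposition:product_O*1}) rather than a leaf-by-leaf spanning-tree elimination. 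The one point where you are sketchier than the paper is the contraction itself: after summing a leaf index $j_b$ against its $P$-tensor you must retain the partially summed tensor $\sum_{j_b}|f_{j_{p(b)}}|$ as an array on the remaining indices of $p(b)$ (its all-but-one line sums are still $O_p(1)$) rather than replacing it by a constant, since the other indices of that block may still need it to pay for their own sums --- this is precisely the bookkeeping that the induction in Proposition \ref{proposition:product_O*1} formalizes, and with it your elimination scheme goes through.
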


Given Lemma \ref{lemma:e_P_Q}, the proof of Theorem
\ref{thm:error_in_logintegral} is straightforward:
\begin{proof}[Proof of Theorem \ref{thm:error_in_logintegral}]
  By Lemma \ref{lemma:e_P_Q}, we have
  $e_{P, Q} = O_p(\sum_{j=1}^d n_j^{-l})$
  for each fixed bipartition $(P, Q) \in \mathcal{M}^C_{l}$.
  Combining the contributions from each bipartition in $\mathcal{M}^C_{l}$,
  we have $e_l =  O_p(\sum_{j=1}^d n_j^{-l})$,
so
 $\epsilon_k =  - \sum_{l=k}^{\infty} e_k = O_p(\sum_{j=1}^d n_j^{-k})$,
as required.
\end{proof}

In order to prove Lemma \ref{lemma:e_P_Q}, we need some auxiliary
results.

  \begin{proposition}
    \label{proposition:polynomial_e_P_Q}
    Let $(P, Q)$ be a fixed $(v, 2m)$ bipartition.
    For each $j = (j_1, \ldots, j_{2m}) \in [1:d]^{2m}$, write  
    $A_{P, Q}(j) =  f_{j_{p_1}} \ldots f_{j_{p_v}}
   f^{j_{q_1}} \ldots f^{j_{q_m}}.$
 Then
 \[
 e_{P, Q} =   \frac{(-1)^v}{(2m)!} \sum_{j \in [1:d]^{2m}} n_{j_1}^{c_1} \ldots n_{j_{2m}}^{c_{2m}} A_{P, Q}(j)
\]
where $\sum_{j=1}^{2m} c_j = -l$, and each $c_j < 0$.
  \end{proposition}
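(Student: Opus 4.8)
The plan is to prove this by substituting the definitions of the normalized quantities $f_I$ and $f^{jk}$ directly into the expression \eqref{eqn:e_P_Q} for $e_{P, Q}$, and then carefully accounting for the resulting powers of the normalizing terms. The whole statement is essentially a bookkeeping exercise in tracking these exponents.

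First I would invert the definition $f_I = g_I \prod_{j \in I} n_j^{-1/|I|}$ to obtain $g_I = f_I \prod_{j \in I} n_j^{1/|I|}$ for each index vector $I$. Applied to the blocks of $P$, each of which has $|p_s| \geq 3$ elements, this replaces every factor $g_{j_{p_s}}$ by $f_{j_{p_s}}$ times a product of terms $n_{j_a}^{1/|p_s|}$, one for each position $a$ in the block $p_s$. Next I would establish the corresponding relation for the inverse-Hessian factors. Writing $D = \mathrm{diag}(n_1, \ldots, n_d)$, the definition $f^{(2)}_{jk} = g_{jk}\, n_j^{-1/2} n_k^{-1/2}$ gives $f^{(2)} = D^{-1/2} g^{(2)} D^{-1/2}$, and inverting yields $[f^{(2)}]^{-1} = D^{1/2} [g^{(2)}]^{-1} D^{1/2}$, so that $f^{jk} = n_j^{1/2}\, g^{jk}\, n_k^{1/2}$, or equivalently $g^{jk} = f^{jk}\, n_j^{-1/2} n_k^{-1/2}$. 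Since each block $q_t$ of $Q$ has exactly two elements, substituting this into each factor $g^{j_{q_t}}$ contributes $f^{j_{q_t}}$ together with two factors of $n^{-1/2}$, one for each position in $q_t$.

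Substituting both relations into \eqref{eqn:e_P_Q} factors the summand as $A_{P, Q}(j)$ times a product of powers of the $n_{j_r}$. It then remains to collect, for each position $r \in \{1, \ldots, 2m\}$, the total exponent $c_r$ of $n_{j_r}$. Each $r$ lies in exactly one block of $P$, of size $|p(r)| \geq 3$, contributing $+1/|p(r)|$, and in exactly one block of $Q$, contributing $-1/2$; hence $c_r = 1/|p(r)| - 1/2$. Because $|p(r)| \geq 3$ we have $c_r \leq 1/3 - 1/2 < 0$, which gives the second claim. For the first claim I would sum over positions and group by $P$-block: the positions lying in a common block $p_s$ contribute $|p_s| \cdot (1/|p_s|) = 1$ to the $P$ part, so $\sum_{r} 1/|p(r)| = v$, while the $Q$ part contributes $-2m \cdot (1/2) = -m$, giving $\sum_{r=1}^{2m} c_r = v - m = -l$, as required.

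I do not anticipate a serious obstacle, since no analytic estimate is involved at this stage. The one step requiring care is the inversion giving $g^{jk} = f^{jk}\, n_j^{-1/2} n_k^{-1/2}$: one must check that the diagonal rescaling $D^{1/2}$ enters symmetrically on both sides of $[g^{(2)}]^{-1}$ and so produces exactly one factor $n^{-1/2}$ per index of the inverse Hessian, matching the single factor $n^{1/2}$ per index that arises from the $P$-blocks and thereby making the final exponent count come out cleanly.
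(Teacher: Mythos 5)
Your proposal is correct and follows essentially the same route as the paper: substitute $g_{j_p} = f_{j_p}\prod_{a \in p} n_{j_a}^{1/|p|}$ and $g^{j_q} = f^{j_q}\prod_{a \in q} n_{j_a}^{-1/2}$ into \eqref{eqn:e_P_Q}, read off $c_r = 1/|p(r)| - 1/2 < 0$, and sum to get $v - m = -l$. Your explicit justification of $[f^{(2)}]^{-1} = D^{1/2}[g^{(2)}]^{-1}D^{1/2}$ is a detail the paper passes over silently, but it is the same argument.
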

  \begin{proof}
    We may write
    \begin{align*}
      e_{P, Q} &=   \frac{(-1)^v}{(2m)!} \sum_{{j} \in [1:d]^{2m}}
      \prod_{p \in P} \prod_{k \in {j}_p} n_k^{1/|p|} f_{{j}_{p}}
       \prod_{q \in Q} \prod_{l \in {j}_q} n_l^{-1/2} f^{{j}_{q}}
      \\
      &=   \frac{(-1)^v}{(2m)!} \sum_{{j} \in [1:d]^{2m}} n_{j_1}^{c_1} \ldots n_{j_{2m}}^{c_{2m}}
      \prod_{p \in P}  f_{{j}_{p}}
      \prod_{q \in Q} f^{{j}_{q}}
    \end{align*}
    for some $c_1, \ldots c_{2m}$.
    We have $c_i = -\frac{1}{2} + \frac{1}{|p|}$, for whichever $p$ contains
    $i$, so $c_i < 0$ as $|p| \geq 3$. We have
    \[\sum_{i=1}^{2m} c_i = -m + \sum_{p \in P} \sum_{i \in p} \frac{1}{|p|}
    = -m + \sum_{p \in P} 1 = -m + v = -l\]
    which gives the result.
  \end{proof}

\begin{proposition}
  \label{proposition:product_O*1}
  Suppose $A = O_p^*(1)$ and $B = O_p^*(1)$, and
  $C$ is the $k$-dimensional array with entries
  $C_{j} = A_{j_{S}} B_{j_{T}}$,
  where $j = (j_1, \ldots, j_k)$, and $S, T \subseteq \{1, \ldots, k\}$,
  such that $S \cup T = \{1, \ldots, k\}.$
  If $S \cap T \not = \emptyset$, then
  $C = O_p^*(1)$.
  \end{proposition}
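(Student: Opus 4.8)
The plan is to verify the defining property of $O_p^*(1)$ directly: I would show that every slice sum of $C$ is $O_p(1)$. Recall that $C = O_p^*(1)$ means that for each coordinate $i \in \{1, \ldots, k\}$ and each value $j$, the slice sum $C^i_j = \sum_{(j_l)_{l \ne i}} |C_{j_1, \ldots, j_k}| = \sum_{(j_l)_{l \ne i}} |A_{j_S}|\,|B_{j_T}|$ is $O_p(1)$. The case $k = 1$ is a degenerate base case: since $S \cup T = \{1\}$ and $S \cap T \ne \emptyset$ force $S = T = \{1\}$, we get $C_j = A_j B_j$, a product of two $O_p(1)$ quantities. So I would fix $i$ and $j$ and bound the slice sum $C^i_j$ for $k \ge 2$.

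Two structural facts drive the bound, and it is worth isolating them first. Because $S \cup T = \{1, \ldots, k\}$, every summation coordinate $l \ne i$ appears in at least one factor, so no coordinate is ever summed while absent from both $A$ and $B$; this is exactly what prevents an uncontrolled factor of $d$ from appearing. Because $S \cap T \ne \emptyset$, I would fix once and for all a shared coordinate $s_* \in S \cap T$. Partitioning the coordinates into those appearing only in $A$ (namely $S \setminus T$), only in $B$ (namely $T \setminus S$), and in both ($S \cap T$), I would observe that, with the shared coordinates held fixed, the summand factorizes, so that $\sum |A_{j_S}|\,|B_{j_T}|$ splits as a product of a sum of $|A_{j_S}|$ over $S \setminus T$ and a sum of $|B_{j_T}|$ over $T \setminus S$.

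The main step is a case split according to where the fixed coordinate $i$ sits. If $i \in S \cap T$, then $i$ indexes both factors, and summing out the remaining coordinates collapses the $A$-part to the slice sum $A^i_j$ and bounds the $B$-part by the slice sum $B^i_j$, giving $C^i_j \le A^i_j B^i_j = O_p(1)$ with no further input. The harder case is $i$ lying in exactly one factor, say $i \in S \setminus T$ (the case $i \in T \setminus S$ being symmetric). Here $i$ indexes $A$ but not $B$, so summing over all coordinates of $A$ other than $i$, including the shared ones, collapses to $A^i_j$, while the accompanying partial sum of $B$ must be controlled uniformly over the shared coordinate by $\sup_{j'} B^{s_*}_{j'}$, the supremum of a slice sum of $B$ along its coordinate $s_*$. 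This yields $C^i_j \le A^i_j \, \sup_{j'} B^{s_*}_{j'} = O_p(1)$.

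I expect the crux to be precisely this last case, where $i$ belongs to only one factor. The shared coordinate $s_*$ then becomes a summation variable common to both factors, so the sum over it does not reduce to a single slice sum of either factor but to a sum of products; handling it forces me to bound the factor not containing $i$ by a supremum of its slice sums over $s_*$, which is where the uniform reading of Condition~\ref{cond:f_O*1} (each slice sum being $O_p(1)$, uniformly in its index) is essential. The role of the hypothesis $S \cap T \ne \emptyset$ is exactly to supply a shared coordinate to anchor this step: without it, the factorized sum would range over a coordinate appearing in no common slice, and the bound would degrade by a factor growing with $d$.
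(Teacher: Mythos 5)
Your argument is correct in outline but takes a genuinely different route from the paper. The paper proves this proposition by induction on $k=\dim(C)$: it picks a summation coordinate $a\neq i$, sums it out first to form the $(k-1)$-dimensional array $C^{-a}$, splits into three cases according to whether $a$ lies in $S$ only, $T$ only, or both (using the bound $\sum_{j_a}|A_{j_S}B_{j_T}|\leq\sum_{j_a}|A_{j_S}|\sum_{j_a}|B_{j_T}|$ in the shared case), and applies the induction hypothesis to conclude $C^{-a}=O_p^*(1)$, hence $C^i_{j_i}=(C^{-a})^i_{j_i}=O_p(1)$. You instead argue directly, fixing the free coordinate $i$ and case-splitting on whether $i\in S\cap T$. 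Your case $i\in S\cap T$ is clean and gives $C^i_j\leq A^i_jB^i_j$ by nonnegativity, essentially the same inequality the paper invokes. The substantive divergence is your case $i\in S\setminus T$: the bound $C^i_j\leq A^i_j\,\sup_{j'}B^{s_*}_{j'}$ requires the slice sums of $B$ to be $O_p(1)$ \emph{uniformly} in the index $j'$, whereas the paper's definition of $O_p^*(1)$ is stated per index ($A^i_j=O_p(1)$ for each $i$ and $j$), and a maximum of $d$ quantities each $O_p(1)$ need not be $O_p(1)$ as $d\to\infty$. You flag this yourself, but you should recognise that the paper's inductive structure is engineered precisely to avoid ever taking such a supremum: at every stage it only evaluates a slice sum of an $O_p^*(1)$ array at a fixed index. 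So your proof establishes the proposition under a (mildly) strengthened, uniform reading of Condition~\ref{cond:f_O*1}; to match the paper's stated hypotheses you would need either to adopt the paper's peel-one-coordinate induction in the mixed case, or to justify why the uniform version of the slice-sum bound is available in the intended applications.
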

\begin{proof}

  We proceed by induction on $k = \dim(C) = |S \cup T|$.

  In the case $k = 1$, we have $S = T$, since $S \cap T \not = \emptyset$.
  So $C_{j_1} = A_{j_1} B_{j_1} = O_p(1)$, so $C = O_p^*(1)$.

  Now we suppose the hypothesis is true for $\dim(C) = k - 1$,
  and consider $\dim(C) = k \geq 2$.

  We have \[C^i_{j_i} = \sum_{j_l: l \not = i} |C_{j}|
  = \sum_{j_l: l \not = i, a} \sum_{j_{a}} |C_{j}|.\]
    Writing $j_{-a} = (j_1, \ldots, j_{a-1}, j_{a+1}, \ldots, j_k)$
    and
    $C^{-a}_{j_{-a}} = \sum_{j_a} |C_{j}|$,
  if we can show that $C^{-a} = O_p^*(1)$
  for some $a \not = i$, then
  \[C^i_{j_i} = \sum_{j_l: l \not = i, a} C^{-a}_{j_{-a}} = O_p(1),\]
  so that $C = O_p^*(1)$.

  $C^{-a}$ has entries
  \begin{equation*}
C^{-a}_{j_{-a}} = \sum_{j_a}  |A_{j_S} B_{j_T}|
  = \begin{cases}
    |B_{j_T}|  \sum_{j_a}  |A_{j_S}|  & \text{if $a \in S$, $a \not \in T$} \\
    |A_{j_S}| \sum_{j_a}  |B_{j_T}| & \text{if $a \not \in S$, $a \in T$} \\
    \sum_{j_a}  |A_{j_S} B_{j_T}| & \text{if $a \in S$, $a \in T$ }
  \end{cases}
  \end{equation*}
  In the first case, we must have $\dim(A) \geq 2$, otherwise
  $S = \{a\}$ and $S \cap T = \emptyset$, which would be a contradiction.
  Since $A = O_p^*(1)$, the array $A^{-a}$ with entries
  $A^{-a}_{j_{S \setminus a}} = \sum_{j_a}  |A_{j_S}|$ must also be $O_p^*(1)$.
  So the array $C^{-a}$
  with entries $C^{-a}_{j_{-a}} = |B_{j_T}| A^{-a}_{j_{S \setminus a}}$ is
  $O_p^*(1)$,
  by the induction hypothesis, since $\dim(C^{-a}) = k - 1$.
  Similarly, in the second case $C^{-a} = O_p^*(1)$.
  In the third case,
\[    C^{-a}_{j_{-a}} = \sum_{j_a}  |A_{j_S} B_{j_T}|
    \leq  \sum_{j_a} |A_{j_S}|  \sum_{j_a}  |B_{j_T}|
    = A^{-a}_{j_{S \setminus a}} B^{-a}_{j_{T \setminus a}} \]
    by the Cauchy--Schwarz inequality.
    So $C^{-a} = O_p^*(1)$, by the induction hypothesis.

    In all cases $C^{-a} = O_p^*(1)$, so $C = O_p^*(1)$, as required.
\end{proof}
    
  \begin{proposition}
    \label{proposition:A_Op*1}
    Suppose Condition \ref{cond:f_O*1} holds.
     Let $(P, Q)$ be a fixed $(v, 2m)$ bipartition.
    Then $A_{P, Q} = O_p^*(1)$.
  \end{proposition}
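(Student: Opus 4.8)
The plan is to assemble the product $A_{P,Q}$ one factor at a time, applying Proposition \ref{proposition:product_O*1} at each stage. First I would note that every one of the $v+m$ factors is itself an $O_p^*(1)$ array: each factor $f_{j_{p_i}}$ is an entry of $f^{(|p_i|)}$ with $|p_i| \geq 3$, which is $O_p^*(1)$ by Condition \ref{cond:f_O*1}, and each factor $f^{j_{q_i}}$ is an entry of $[f^{(2)}]^{-1}$, also $O_p^*(1)$ by the same condition. The task then reduces to showing that a product of such factors, indexed by the blocks of $P$ and $Q$, is again $O_p^*(1)$.

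To invoke Proposition \ref{proposition:product_O*1} each time a new factor is multiplied in, the index set of that factor must overlap the index set of the partial product accumulated so far, i.e.\ the hypothesis $S \cap T \neq \emptyset$ must hold. This is where the order of combination is crucial: the factors coming from $P$ have mutually disjoint index sets, since the blocks of $P$ partition $\{1, \ldots, 2m\}$, and likewise for $Q$, so combining two $P$-factors (or two $Q$-factors) first would violate the overlap hypothesis. The key step is therefore to produce an ordering $B_1, \ldots, B_{v+m}$ of all the blocks of $P$ and $Q$ such that each $B_t$, for $t \geq 2$, shares at least one element of $\{1, \ldots, 2m\}$ with $B_1 \cup \cdots \cup B_{t-1}$.

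The existence of such an ordering is exactly where the connectivity of $\mathcal{G}(P,Q)$ enters, and I expect this to be the main point of the argument; indeed, without connectivity the statement fails, since a disconnected bipartition factors over disjoint index sets and a marginal sum over a free index contributes a factor of $d$. I would introduce the meta-graph $H$ whose nodes are the blocks of $P$ and $Q$ and whose edges join two blocks that share a common element. Since each element of $\{1, \ldots, 2m\}$ lies in exactly one block of $P$ and one of $Q$, any path in $\mathcal{G}(P,Q)$ translates into a walk in $H$, so connectivity of $\mathcal{G}(P,Q)$ forces $H$ to be connected. A connected graph admits a node ordering, for instance by a breadth-first traversal of a spanning tree, in which every node after the first is adjacent to an earlier one, and adjacency in $H$ is precisely the element-sharing condition required above.

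With this ordering fixed, I would conclude by induction on $t$. The partial product $C_t$ of the first $t$ factors has index set $S_t = B_1 \cup \cdots \cup B_t$; combining $C_{t-1}$ (index set $S_{t-1}$) with the $t$-th factor (index set $B_t$) gives $S_{t-1} \cup B_t = S_t$ and, by the choice of ordering, $S_{t-1} \cap B_t \neq \emptyset$. Proposition \ref{proposition:product_O*1} then yields $C_t = O_p^*(1)$, and taking $t = v+m$ gives $A_{P,Q} = O_p^*(1)$, as required.
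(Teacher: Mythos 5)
Your proof is correct and follows essentially the same route as the paper: both build up $A_{P,Q}$ one block at a time via Proposition \ref{proposition:product_O*1}, using connectivity of $\mathcal{G}(P,Q)$ to guarantee that each new factor's index set overlaps the accumulated one. The only difference is organisational --- the paper uses a specific alternating $p_1, q_1, p_2, q_2, \ldots$ ordering that links the $P$-blocks first and appends the remaining $Q$-blocks at the end, whereas you take a spanning-tree ordering of the block-intersection graph, which is a slightly cleaner way to package the same idea.
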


  \begin{proof}
    We have 
    $A_{P, Q}({j}) = \prod_{p \in P} f_{{j}_p} \prod_{q \in Q} f^{{j}_q}.$
    Since $f^{(k)} = O_p^*(1)$ for $k \geq 3$ and $[f^{(2)}]^{-1} = O_p^*(1)$,
    $A_{P, Q}$ is a product of $O_p^*(1)$ arrays.

    We build up this product one term at a time, at each step applying
    Proposition \ref{proposition:product_O*1} to show that the product remains $O_p^*(1)$.
    
    We start with an arbitrary $p_1 \in P$,
    and choose $q_1 \in Q$ such that one element of $q_1$ is in $p_1$,
    and the other is not in $p_1$, and therefore must be in some other block $p_2 \in P$.
    If $v > 1$, it will always be possible to find such a $q$,
    because $(P, Q)$ is a connected
    bipartition, so
    blocks of $P$ (which form disjoint clusters in $\mathcal{G}_{P, Q}$) 
    are connected by blocks of $Q$.
    
    Let $S^1$ be the array with entries
    $S^1_{{j}_{p_1 \cup q_1}} = f_{{j}_{p_1}} f^{{j}_{q_1}}.$ Then 
    $S^1 = O_p^*(1)$ by Proposition \ref{proposition:product_O*1},
    as $p_1 \cap q_1 \not = \emptyset$.
    Let $T^1$ be the array with entries
    $T^1_{{j}_{p_1 \cup p_2}} =  f_{{j}_{p_2}} S^1_{{j}_{p_1 \cup q_1}}.$ Then
    $T^1 = O_p^*(1)$, as $p_2 \cap (p_1 \cup q_1) = p_2 \cap q_1 \not = \emptyset$.

    We continue to choose alternating terms from blocks of $Q$ and $P$,
    at step $k$ choosing a block $q_k$ with one entry in $p_1 \cup \ldots \cup p_k$,
    and the other entry in a new block $p_{k+1}$. At each stage $k$ we have
    $S^k_{{j}_{p_1 \cup \ldots \cup p_{k} \cup q_k}} = f^{{j}_{q_k}} \, T^{k-1}_{{j}_{p_1 \cup \ldots \cup p_{k}}}$
    and
    $T^{k}_{{j}_{p_1 \cup \ldots \cup p_k \cup p_{k+1}}} =  f_{{j}_{p_{k+1}}} \, S^{k}_{{j}_{p_1 \cup \ldots \cup p_k \cup q_k}},$
    where $S^{k} = O_p^*(1)$ and $T^{k} = O_p^*(1)$.
    
    We continue until we have included all blocks of $P$, and have 
    $T^{v-1}_{{j}_{p_1 \cup p_2 \cup \ldots \cup p_v}} = T^{v-1}_{{j}}$
    where $T^{v - 1} = O_p^*(1)$. We have already included terms
    from $v - 1$ blocks of $Q$. We may multiply in the remaining $2m - v + 1$
    blocks of $Q$ while retaining an $O_p^*(1)$ array by Proposition \ref{proposition:product_O*1}
    as $T^{v-1}$ is an array on all indices $j_1, \ldots, j_{2m}$, and
    $q \cap (1:2m)  = q \not = \emptyset$
    for each $q \in Q$.
    So $A_{P, Q} = O_p^*(1)$, as required.
  \end{proof}
  
\begin{proof}[Proof of Lemma \ref{lemma:e_P_Q}]
  By Proposition \ref{proposition:polynomial_e_P_Q}
 \begin{align}
   \vert e_{P, Q} \vert &=   \frac{1}{(2m)!}\, \Bigg\vert \sum_{{j} \in [1:d]^{2m}} n_{j_1}^{c_1} \ldots n_{j_{2m}}^{c_{2m}} A_{P, Q}({j}) \, \Bigg\vert \notag \\
   &\leq   \frac{1}{(2m)!} \sum_{{j} \in [1:d]^{2m}} n_{j_1}^{c_1} \ldots n_{j_{2m}}^{c_{2m}} \vert A_{P, Q}({j}) \vert.
   \label{eqn:abs_e_P_Q}
 \end{align}
 We apply the weighted form of the inequality of arithmetic and geometric means,
 which states that given non-negative numbers $x_1, \ldots, x_{n}$ and non-negative
 weights $w_1, \ldots, w_{n}$ with $\sum_{i} w_i = 1$,
 \[\prod_{i=1}^n x_i^{w_i} \leq \sum_{i=1}^n w_i x_i .\]
 Here, we let $n = 2m$, $x_i = n_{j_i}^{-l}$ and $w_i = -c_i / l$, to give that
 \begin{equation}
   n_{j_1}^{c_1} \ldots n_{j_{2m}}^{c_{2m}} \leq \sum_{i=1}^{2m} w_i n_{j_i}^{-l}.
   \label{eqn:am_gm_bound}
   \end{equation}
 Putting \eqref{eqn:am_gm_bound} back into \eqref{eqn:abs_e_P_Q} gives
\begin{align*}
  \vert e_{P, Q} \vert &\leq  \frac{1}{(2m)!} \sum_{{j} \in [1:d]^{2m}} \sum_{i=1}^{2m} w_i n_{j_i}^{-l} \,|A_{P, Q}({j})| \\
  &= \frac{1}{(2m)!} \sum_{i=1}^{2m} \sum_{j_i = 1}^d  w_i n_{j_i}^{-l} \sum_{j_1, \ldots, j_{i-1}, j_{i+1}, \ldots, j_{2m}} |A_{P, Q}({j})| \\
  &=  \frac{1}{(2m)!} \sum_{i=1}^{2m} \sum_{j_i = 1}^d w_i n_{j_i}^{-1} A_{j_i}^{i}\\
  &=  \frac{1}{(2m)!} \sum_{i=1}^{2m} O_p\Big( \sum_{j_i = 1}^d n_{j_i} ^{-l}\Big)
  = O_p\Big(\sum_{j = 1}^d n_{j} ^{-l}\Big)
\end{align*}
since $m$ is fixed as $d \rightarrow \infty$.
\end{proof}

\section{Proofs for examples}
\label{sec:proofs_examples}
\begin{proof}[Proof of Proposition \ref{proposition:GLMM_g_convex}]
  The matrix of second derivatives of $g(.)$ with respect
  to ${u}$ is
  $g^{(2)}({u}) = h^{(2)}({u}) + \Sigma^{-1},$
  where $h^{(2)}({u})$ is the matrix of second derivatives
  of $h(.)$ with respect to ${u}$, and $h(.)$
  is defined in \eqref{eqn:GLMM_h}.
  We have
  $h^{(2)}({u}) = Z^T W({u}) Z,$
  where $W({u})$ is a diagonal matrix with diagonal entries
  \[W_{ii}({u}) = \frac{b^{\prime \prime}(X_i^T \beta + Z_i^T u)}{a_i(\phi)}.\]
  But $a_i(\phi) > 0$, and since $b(.)$ is a convex function $b^{\prime \prime}(X_i^T \beta + Z_i^T u) \geq 0$, so $W_{ii}(u) \geq 0$ for all $u$. So $W({u})$ is a non-negative
  definite matrix, and for any $x \in \mathbb{R}^d$,
  $x^T h^{(2)}({u}) x = (x Z)^T W (Z x) \geq 0,$
  which means that $h^{(2)}({u})$ is non-negative definite.
  Since $\Sigma^{-1}$ is positive definite, this means
  that $g^{(2)}({u})$ is positive definite for all ${u}$,
  so $g(.)$ is strictly convex, and therefore has a unique minimum.
  Since $b(.)$ is a smooth function, so is $g(.)$, so Condition
\ref{cond:g_smooth_unique_minimum} holds.
\end{proof}

\begin{proof}[Proof of Proposition \ref{proposition:multilevel}]

  To prove the result, we need to show that after reparameterization
  Condition \ref{cond:f_O*1} holds with normalizing terms $n_1, \ldots, n_d$,
  so that we can apply Theorem
  \ref{thm:error_in_logintegral}.
  
  For $k \geq 3$, $ g^{(k)}$ is diagonal with
  diagonal terms $g_{j \ldots j} = \Theta_p(n_j)$, so
  $ f^{(k)} = O_p^*(1)$ for $k \geq 3$.
 It remains to show that $[ f^{(2)}]^{-1} = O_p^*(1)$.
  
  Write
  \[\Sigma^{[l]}_{jk} = \begin{cases}
  \sigma_2^2 + \sigma_3^2 + \ldots + \sigma_l^2 & \text{if $j = k$} \\
  \sigma_3^2 + \ldots + \sigma_l^2  & \text{if $j \not = k$, but $c_3(j) = c_3(k)$} \\
 \vdots & \vdots \\
  \sigma_l^2 & \text{if $c_{l-1}(j) \not = c_{l-1}(k)$, but $c_{l}(j)  = c_{l}(k)$,}
  \end{cases}\]
  so that $\Sigma = \Sigma^{[L]}$.

  $\Sigma^{[l]}$ is a block-diagonal matrix, with $d_l$ blocks,
  one for each level-$l$ cluster. We have
  \[\Sigma^{[l]}_{jk} =
  \begin{cases}
    \Sigma^{[l-1]}_{jk} + \sigma_l^2 & \text{if $c_l(j) = c_l(k)$} \\
    0 & \text{otherwise}
  \end{cases}
  \]

   Write $\Sigma_{[l]}^{jk} = (\Sigma^{[l]})^{-1}_{jk}$.
  Applying the Sherman--Morrison formula to invert each block
  of $\Sigma^{[l]}$ gives
  \begin{equation}
    \label{eqn:Sigma_inv_recursion}
  \Sigma_{[l]}^{jk} =
    \begin{cases}
    \Sigma_{[l-1]}^{jk} - \frac{\sigma_l^2 r_j r_k}{1 + \sigma_l^2 s_{c_l(j)}} & \text{if $c_l(j) = c_l(k)$} \\
    0 & \text{otherwise,}
    \end{cases}
   \end{equation}
    where
    \begin{equation}
      \label{eqn:Sigma_inv_rowsums}
      r_j = \sum_{k: c_l(k) = c_l(j)} \Sigma_{[l-1]}^{jk}, \quad  s_c = \sum_{j: c_l(j) = c} r_j.
    \end{equation}

    We hypothesize that
    \begin{equation}
      \label{eqn:Sigma_inv_sizes}
    \Sigma_{[l]}^{jk} = 
    \begin{cases}
  \Theta(1) & \text{if $j = k$} \\
  \Theta\big((d^{3}_{c_3(j)})^{-1} \big) & \text{if $j \not = k$, but $c_3(j) = c_3(k)$} \\
 \vdots & \vdots \\
 \Theta\big((d^{l}_{c_l(j)})^{-1}\big) & \text{if $c_{l-1}(j) \not = c_{l-1}(k)$, but $c_{l}(j)  = c_{l}(k)$} \\
 0 & \text{otherwise,}
    \end{cases}
    \end{equation}
    and prove this by induction on $l$. This claim is true for $l = 2$, as
    $\Sigma_{[2]}^{-1} = \sigma_2^{-2} I$. For $l \geq 2$, applying the induction
    hypothesis to \eqref{eqn:Sigma_inv_rowsums}, we find
    $r_j = \Theta(1)$, so $s_c = \Theta(d_c^l)$ and
    \begin{equation}
      \label{eqn:Sigma_inv_increment}
      \frac{\sigma_l^2 r_j r_k}{1 + \sigma_l^2 s_{c_l(j)}} = \Theta\big((d_c^l)^{-1}\big).
      \end{equation}
    Substituting \eqref{eqn:Sigma_inv_increment} into \eqref{eqn:Sigma_inv_recursion}
    proves \eqref{eqn:Sigma_inv_sizes}.
    
    Now write $g^{[l]}_{jk} = h_{jk} - \Sigma_{[l]}^{jk},$
      so that $g_{jk} = g^{[L]}_{jk}$. 
      Again, $g^{[l]}$ is block-diagonal, and
      \begin{align*}
        g^{[l]}_{jk} &= g^{[l-1]}_{jk} - (\Sigma^{jk}_{[l]} - \Sigma^{jk}_{[l-1]}) \\
        &=  
 \begin{cases}
          g^{[l-1]}_{jk} + \frac{\sigma_l^2 r_j r_k}{1 + \sigma_l^2 s_{c_l(j)}}  &
          \text{if $c_l(j) = c_l(k)$} \\
          0 & \text{otherwise.}
        \end{cases}
      \end{align*}

      Write $g_{[l]}^{jk} = (g^{[l]})^{-1}_{jk}$.
      Applying the Sherman--Morrison formula to invert each block
      of $g^{[l]}$ gives
    \begin{equation}
\label{eqn:g_inv_recursion} 
      g_{[l]}^{jk} = \begin{cases}
        g_{[l - 1]}^{jk} - \frac{\alpha a_j a_k}{1 + \alpha b_{c_l(j)}} & \text{if $c_l(j) = c_l(k)$} \\
        0 & \text{otherwise,} \end{cases}
      \end{equation}
        where
        \[\alpha = \frac{\sigma_l^2}{1 + \sigma_l^2 s_{c_l(j)}} = \Theta\big((d^{c_l(j)}_l)^{-1}\big),\]
        \begin{equation}
          \label{eqn:a_definition}
          a_j = \sum_{k: c_l(k) = c_l(j)} r_j g_{[l-1]}^{jk}, \quad b_c = \sum_{j, k:  c_l(j) = c_l(k) = c} r_j r_k g_{[l-1]}^{jk}.
          \end{equation}

   We hypothesize that
    \begin{equation}
      \label{eqn:g_inv_sizes}
    g_{[l]}^{jk} = 
    \begin{cases}
  O_p(n_j^{-1}) & \text{if $j = k$} \\
  O_p\big((d^{3}_{c_3(j)})^{-1} n_j^{-1} n_k^{-1}\big) & \text{if $j \not = k$, but $c_3(j) = c_3(k)$} \\
 \vdots & \vdots \\
O_p\big((d^{l}_{c_l(j)})^{-1} n_j^{-1} n_k^{-1}\big) & \text{if $c_{l-1}(j) \not = c_{l-1}(k)$, but $c_{l}(j)  = c_{l}(k)$} \\
 0 & \text{otherwise,}
    \end{cases}
    \end{equation}
    and prove this by induction on $l$. This claim is true for $l = 2$, as
    $g^{[2]}$ is diagonal, with diagonal entries $h_{jj} + \sigma_2^{-2} = \Theta_p(n_j)$.
    For $l \geq 2$, applying the induction
    hypothesis to \eqref{eqn:a_definition}, recalling that $r_j = \Theta(1)$,
    we find
    \[a_j = \sum_{k: c_l(k) = c_l(j)} O_p\big((d^{l}_{c_l(j)})^{-1} n_j^{-1} n_k^{-1}\big)
    = O_p(n_j^{-1})\] and
    \[b_c =  \sum_{k: c_l(k) = c_l(j) = c} O_p\big((d^{l}_{c})^{-1} n_j^{-1} n_k^{-1}\big) = O_p(1),\]
    so
    \begin{equation}
      \label{eqn:g_inv_increment}
      \frac{\alpha \, a_j a_k}{1 + \alpha b_{c_l(j)}} =
      \frac{a_j a_k}{\alpha^{-1 }+ b_{c_l(j)}} = O_p\big((d^{c_l(j)}_l)^{-1} n_j^{-1} n_k^{-1}\big)
      \end{equation}
    Substituting \eqref{eqn:g_inv_increment} into \eqref{eqn:g_inv_recursion}
    proves \eqref{eqn:g_inv_sizes}.

    Normalizing,
    \begin{align*}
      f^{jk} &= n_j^{1/2} n_k^{1/2} g^{jk} = n_j^{1/2} n_k^{1/2} g^{jk}_{[L]} \\
      &=     \begin{cases}
  O_p(1) & \text{if $j = k$} \\
O_p\big((d^{l}_{c_l(j)})^{-1} n_j^{-1/2} n_k^{-1/2}\big) & \text{if $c_{l-1}(j) \not = c_{l-1}(k)$, but $c_{l}(j)  = c_{l}(k)$,} \\ &\text{ for $l = 3, \ldots, L$} \\
 0 & \text{otherwise.}
    \end{cases}
      \end{align*}
    Then
    \begin{align*}
      \sum_k \vert f^{jk} \vert
      &= O_p\left(1 + \sum_{l=3}^L \sum_{k : c_{l-1}(j) \not = c_{l-1}(k), c_l(j) = c_l(k)} \big(d^{l}_{c_l(j)}\big)^{-1} n_j^{-1/2} n_k^{-1/2} \right) \\
        & = O_p\left(1 + \sum_{l=3}^L d^{l}_{c_l(j)}  \big(d^{l}_{c_l(j)}\big)^{-1} n_j^{-1/2} \max_k \{n_k^{-1/2}\} \right) \\
        & = O_p(1 + n_j^{-1/2}) = O_p(1),
    \end{align*}
    and $\sum_j \vert f^{jk} \vert = \sum_{k} \vert f^{kj} \vert = O_p(1)$,
    so $f^{(2)} = O_p^{*}(1)$. So Condition
    \ref{cond:f_O*1} holds with normalizing terms $n_1, \ldots, n_d$,
    and Theorem \ref{thm:error_in_logintegral}
gives that $\epsilon_k({\theta}) = O_p(\sum_{j=1}^d n_j^{-k})$,
as required.
  \end{proof}
\end{appendices}

\bibliography{ogden}
\bibliographystyle{plainnat}

\end{document}